\documentclass[11pt,reqno]{amsart}

\usepackage[T1]{fontenc}
\usepackage{amsmath}
\usepackage{amssymb}
\usepackage{mathtools}
\usepackage[margin=1in]{geometry}
\usepackage[colorlinks=true,citecolor=blue,linkcolor=blue,urlcolor=blue]{hyperref}
\allowdisplaybreaks

\newtheorem{theorem}{Theorem}
\newtheorem{definition}[theorem]{Definition}
\newtheorem{proposition}[theorem]{Proposition}
\newtheorem{lemma}[theorem]{Lemma}
\newtheorem{corollary}[theorem]{Corollary}

\newtheorem{remark}[theorem]{Remark}

\title[Sharp CMLSI on tori]
{Sharp Complete Modified Log-Sobolev Inequalities on Classical and
Quantum Tori}

\author{Long Zhao}
\address{School of Mathematics and Statistics, Wuhan University, Wuhan 430072, China}
\email{zhaolong@whu.edu.cn}
\date{\today}

\subjclass[2020]{Primary 46L57, 47D07; Secondary 46L53, 58B34}
\keywords{complete modified logarithmic Sobolev inequality,
heat semigroup, circle, quantum torus, logarithmic mean,
Fisher information}

\date{\today}

\begin{document}

\begin{abstract}
We prove that the heat semigroup on the circle has optimal complete
modified logarithmic Sobolev constant \(1\).  The proof is based on a 
matrix-valued Wirtinger inequality and yields the stronger
Bogoliubov--Kubo--Mori Fisher information contraction with rate
\(e^{-2t}\).  As a consequence, the complete tensorization and transference principle yields the same sharp constant for the heat semigroups on classical and quantum tori.
\end{abstract}

\maketitle

\section{Introduction}
\label{sec:introduction}

Since Gross's seminal work relating logarithmic Sobolev inequalities (LSI) with
hypercontractivity \cite{Gross1975LSI}, these inequalities
have been extensively studied on manifolds and graphs because of
their connections with geometry and concentration of measure\cite{Ledoux2001Concentration,DiaconisSaloffCoste1996,
BakryGentilLedoux2014}. For the unit circle $\mathbb T=\mathbb R/(2\pi\mathbb Z)$, Weissler in \cite{Weissler1980Circle} proved that for any smooth function $f:\mathbb{T}\to \mathbb{R}$
\begin{equation}
  \int f^2\log f^2\,d\mu-(\int f^2d\mu) \log(\int f^2d\mu)
 \le 2\int|f'|^2d\mu,
 \label{eq:LSI}
\end{equation}
where $\mu$ is the uniform measure.
See \cite{Emery87} for an elementary proof was later given by
\'{E}mery and Yukich. Taking $g=f^2$, \eqref{eq:LSI} admits an equivalent formulation, called modified log-Sobolev inequality (MLSI)
\begin{equation}
  \int g\log g\,d\mu-(\int g\,d\mu)\log (\int g\,d\mu)
 \le -\frac12\int\!
 g{''}\log g
 d\mu.
 \label{eq:MLSI}
\end{equation}
In this work, we prove the following MLSI for matrix-valued functions.
\begin{theorem}\label{thm:sharp-circle}
For every \(n\geq1\)  and for every smooth, pointwise positive, matrix-valued function
\(g:\mathbb T\to M_n^+\),
\begin{equation}
 \operatorname{tr}\!\left(
  \int_{\mathbb T}g\log g\,d\mu-(\mathbb Eg)\log(\mathbb Eg)
 \right)
 \le \frac12\operatorname{tr}\left(\int_{\mathbb T}\!
  (\Delta g)(\log g-\log\mathbb Eg)
 d\mu\right),
 \label{eq:main}
\end{equation}
where $\mathbb Eg=\int_{\mathbb T}g\,d\mu$ is the matrix-valued mean, $\Delta g= -g''$ is the Laplacian and $\operatorname{tr}$ denotes the matrix trace.
\end{theorem}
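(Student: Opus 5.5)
We would prove \eqref{eq:main} by the semigroup (de Bruijn) method, using the heat semigroup \(P_t=e^{-t\Delta}\) on \(\mathbb T\). It acts entrywise on matrix-valued functions, preserves pointwise positivity, and satisfies \(\mathbb E P_tg=\mathbb Eg\) and \(P_tg\to\mathbb Eg\) as \(t\to\infty\). Write the left side of \eqref{eq:main} as \(\operatorname{Ent}(g)\). Write the right side (without the factor \(\tfrac12\)) as the BKM Fisher information
\[
I(g)=\operatorname{tr}\int_{\mathbb T}(\Delta g)(\log g-\log\mathbb Eg)\,d\mu
=\operatorname{tr}\int_{\mathbb T} g'\,\mathcal J_g^{-1}(g')\,d\mu ,
\qquad \mathcal J_g(X)=\int_0^1 g^{s}Xg^{1-s}\,ds .
\]
The second expression comes from integrating by parts and from \((\log g)'=\mathcal J_g^{-1}(g')\). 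The term involving \(\log\mathbb Eg\) drops out because \(\int\Delta g\,d\mu=0\). Differentiating in \(t\), again using \(\int \Delta P_tg\,d\mu=0\), gives the identity \(\frac{d}{dt}\operatorname{Ent}(P_tg)=-I(P_tg)\). Since \(\operatorname{Ent}(P_tg)\to0\), this yields \(\operatorname{Ent}(g)=\int_0^\infty I(P_tg)\,dt\). It therefore suffices to prove the Fisher information contraction \(I(P_tg)\le e^{-2t}I(g)\), which integrates to \(\operatorname{Ent}(g)\le \tfrac12 I(g)\), i.e.\ \eqref{eq:main}. By the semigroup property and Gr\"onwall, the contraction reduces to the differential inequality \(\frac{d}{dt}I(P_tg)\big|_{t=0}\le -2I(g)\) for every smooth positive \(g\).

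To compute \(\frac{d}{dt}I(P_tg)\), we set \(\partial_t g=g''\) and differentiate the quadratic form \((X,\rho)\mapsto \operatorname{tr}X\mathcal J_\rho^{-1}(X)\) in both arguments. The derivative in \(X=g'\) gives \(2\operatorname{tr}\int g'''\mathcal J_g^{-1}(g')\). After one integration by parts this becomes \(-2\operatorname{tr}\int g''\mathcal J_g^{-1}(g'')\), up to a term in which the derivative falls on \(\mathcal J_g^{-1}\). That term combines with the derivative in \(\rho\), namely \(-\operatorname{tr}\int \mathcal J_g^{-1}(g')\,\mathcal J_g'[g'']\,\mathcal J_g^{-1}(g')\). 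We expect the combination to be a nonpositive remainder. This is the noncommutative analogue of the scalar identity \(\frac{d}{dt}\int (g')^2/g=-2\int g\,((\log g)'')^2\). The sign should follow from joint convexity of \((X,\rho)\mapsto\operatorname{tr}X\mathcal J_\rho^{-1}(X)\) (Lieb's convexity), or equivalently from the operator convexity of the BKM weight \((x-y)/(\log x-\log y)\) expressed through divided differences. The target is an inequality of the form
\[
\frac{d}{dt}I(P_tg)\le -2\,\operatorname{tr}\int_{\mathbb T} \Phi_g\,d\mu ,
\]
where \(\Phi_g\) is a "second-order" quadratic quantity such as \(h'\,\mathcal J_g(h')\) with \(h=(\log g)'\).

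The main obstacle, and the point where the circle differs from positively curved settings, is then a matrix-valued Wirtinger inequality of the form
\[
\operatorname{tr}\int_{\mathbb T}\Phi_g\,d\mu\;\ge\;\operatorname{tr}\int_{\mathbb T} g'\,\mathcal J_g^{-1}(g')\,d\mu=I(g).
\]
Bakry--\'Emery gives nothing here, since the curvature is zero. The rate \(2\) must instead come from the spectral gap \(1\) of \(-d^2/dx^2\) on mean-zero periodic functions. The relevant mean-zero function is \(g'=\mathcal J_g(h)\), which integrates to \(0\) by periodicity. We would first try to prove the inequality by writing everything in the eigenbasis of \(g(x)\) pointwise and using divided differences. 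A cleaner alternative is a substitution analogous to \(g=f^2\) in the scalar case (for instance a square-root or BKM-adapted factorization), chosen so that both sides become a single Hermitian-valued function \(F\) with \(\int F'\,d\mu=0\). The claim would then reduce to the classical Wirtinger inequality \(\int|F''|^2\ge\int|F'|^2\), applied entrywise via Fourier series. If this exact reduction fails, a fallback is to keep part of the nonpositive remainder from the previous step to absorb the error terms produced by noncommutativity.
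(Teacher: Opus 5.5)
Your outer reduction (de Bruijn identity, reduction to $I(P_tg)\le e^{-2t}I(g)$, Gr\"onwall) is the paper's. However, both substantive steps are left as expectations, and the first is misstated. Take $n=1$, where $\mathcal J_g^{-1}(X)=X/g$. The term in which the $\theta$-derivative falls on $\mathcal J_g^{-1}$, plus the $\rho$-derivative, equals $\int g''(g')^2g^{-2}\,d\mu=\tfrac23\int (g')^4g^{-3}\,d\mu$. This is strictly positive for nonconstant $g$, so your remainder is not nonpositive, and $\frac{d}{dt}I\le-2\operatorname{tr}\int g''\mathcal J_g^{-1}(g'')\,d\mu$ is false. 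The scalar identity you quote holds only because $\int g((\log g)'')^2\,d\mu=\int (g'')^2g^{-1}\,d\mu-\tfrac13\int (g')^4g^{-3}\,d\mu$. What joint convexity of $F(X,\rho)=\operatorname{tr}X\mathcal J_\rho^{-1}(X)$ actually gives is the following: with $w=(g',g)$ one has $\partial_tw=w''$, hence $\frac{d}{dt}\int F(w)\,d\mu=-\int D^2F(w)[w',w']\,d\mu\le0$. That is only monotonicity of $I$; the whole rate $2$ must come from the Wirtinger step.

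That step is the genuine gap: you give no argument for $\int\Phi_g\ge I(g)$, and the scalar mechanism you want to imitate breaks down. For $n=1$ the relevant function $u=g'/g=(\log g)'$ is itself mean-zero, so Wirtinger applies to it directly. For matrices, $u=g^{-1}g'$ is in general neither self-adjoint nor mean-zero, and it is not $(\log g)'$. So no single Hermitian $F$ carries both sides, and the mean-zero property of $g'$ says nothing about the $g$-weighted quantities.

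The paper supplies two ideas you are missing.
\begin{enumerate}
\item The resolvent decomposition $I(g_t)=\int_0^\infty\mathcal E_s(t)\,ds$, with $\mathcal E_s=\int\operatorname{tr}(u_s^2)\,d\mu$ and $u_s=(g_t+s)^{-1}g_t'$. Since $g_t+s$ still solves the heat equation, each piece satisfies the \emph{exact} identity $\frac{d}{dt}\mathcal E_s=-2\int\operatorname{tr}((u_s')^2)\,d\mu$; the commutator and $\operatorname{tr}(u'u^2)$ terms integrate to zero.
\item The nonlinear Wirtinger inequality $A\ge E$, where $A=\int\operatorname{tr}((u')^2)\,d\mu$ and $E=\int\operatorname{tr}(u^2)\,d\mu$ for $u=a^{-1}a'$. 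This is not a direct Poincar\'e inequality. The argument runs as follows.
\begin{itemize}
\item A congruence $a\mapsto p^{-1/2}ap^{-1/2}$ changes $u$ and $u'$ only by a fixed similarity. Choosing $p$ as the Karcher mean makes $X=\log a$ mean-zero; this is the matrix substitute for rescaling.
\item Poincar\'e and $\ell(x,y)\ge\sqrt{xy}$ give $V:=\int\operatorname{tr}(X^2)\,d\mu\le\int\operatorname{tr}((X')^2)\,d\mu\le E$.
\item Integrating $(\tfrac12\operatorname{tr}X^2)''$ over the circle and using $\ell(x,y)\le\tfrac{x+y}2$ gives $E\le-\int\operatorname{tr}(Xu')\,d\mu$.
\item Cauchy--Schwarz, applied with the self-adjoint representative of $u'$, gives $E\le\sqrt{VA}\le\sqrt{EA}$, hence $A\ge E$.
\end{itemize}
\end{enumerate}
Without an argument of this kind, your proposal does not reach the constant $1$.
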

Note that the above inequality is sharp, as the scalar-valued case $n=1$ is the equivalent form \eqref{eq:MLSI} of Weissler's inequality is already known to be sharp.
Theorem 1 shows that the MLSI holds \eqref{eq:MLSI} uniformly for matrix-valued functions independent of the matrix dimension $n\ge 1$.

The above uniform matrix-valued MLSI, called complete modified Log-Sobolev inequality (CMLSI), was introduced in \cite{GaoJungeLaRacuente2020Fisher} for the tensorization of quantum Markov semigroups.  Theorem \ref{thm:sharp-circle} basically says that for the heat semigroup $H_t=e^{-\Delta t}$ on the unit circle,
\[\alpha_{c}(\Delta)=1 \]
where \(\alpha_{\mathrm c}\) denotes the optimal CMLSI
constant. The previously known lower bound obtained was strictly smaller than \(1\) \cite{BrannanGaoJunge2022}. 
The sharp constant $1$ was posed as an open question in
\cite[Problem~6.5]{GaoJungeLaRacuenteLi2025CPOrder}. We answer this question affirmatively. 

One application of the operator-valued inequality above is
transference to noncommutative spaces. For a real skew-symmetric
\(d\times d\) matrix \(\theta\), let \(C(\mathbb T_\theta^d)\) be the
universal unital \(C^*\)-algebra generated by unitaries
\(U_1,\ldots,U_d\) satisfying
\(U_jU_k=e^{2\pi i\theta_{jk}}U_kU_j\). For
\(m=(m_1,\ldots,m_d)\in\mathbb Z^d\), write
\(U^m=U_1^{m_1}\cdots U_d^{m_d}\). The canonical trace
\(\tau_\theta\) is determined by
\(\tau_\theta(U^m)=\delta_{m,0}\), and we denote by
\(\mathcal R_\theta^d
=\pi_{\tau_\theta}(C(\mathbb T_\theta^d))''\)
the corresponding von Neumann algebra. Its standard heat generator
is the Fourier multiplier
\[\Delta_{\theta}(U^m)=|m|^2U^m,\quad \text{ where }\quad |m|^2=m_1^2+\cdots+m_d^2,\] 
and its fixed-point expectation is
\(E_\theta(x)=\tau_\theta(x)1\). When \(\theta=0\), this reduces to the
classical heat generator \(\Delta_{0}\) on \(\mathbb T^d\). 
Complete tensorization \cite[Section~7.1]{GaoJungeLaRacuente2020Fisher} followed by the gauge transference principle \cite[Section~6.2]{BrannanGaoJunge2023II} yields the sharp CMLSI constant on both classical and quantum tori.
\begin{corollary}[Classical and quantum tori]
\label{cor:sharp-tori}
For every \(d\geq1\) and every real skew-symmetric \(\theta\),
\begin{equation}
    \alpha_{\mathrm c}(\Delta_{\theta})
    =\alpha(\Delta_{\theta})=1.
    \label{eq:quantum-torus-sharp-constant}
\end{equation}
\end{corollary}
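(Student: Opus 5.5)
The plan is to establish the two inequalities \(\alpha_{\mathrm c}(\Delta_\theta)\ge 1\) and \(\alpha(\Delta_\theta)\le 1\). Together with the trivial \(\alpha_{\mathrm c}\le\alpha\), these give \(1\le\alpha_{\mathrm c}(\Delta_\theta)\le\alpha(\Delta_\theta)\le 1\).

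\emph{Lower bound.} First treat the classical torus \(\theta=0\). By Theorem~\ref{thm:sharp-circle}, \(\alpha_{\mathrm c}(\Delta)\ge1\) for the circle. Here the normalization is that inequality \eqref{eq:main} reads \(D(g\|\mathbb Eg)\le \frac{1}{2\alpha}\,\mathrm{EP}(g)\) with \(\alpha=1\). The generator on \(\mathbb T^d\) is \(\Delta_0=\sum_{j}\Delta^{(j)}\), a sum of commuting circle generators acting on independent tensor factors. Complete tensorization \cite[Section~7.1]{GaoJungeLaRacuente2020Fisher} states that \(\alpha_{\mathrm c}\) of such a sum is at least the minimum of the factor constants. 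This is exactly where the complete (matrix-valued, dimension-free) version matters: the other factors act as matrix coefficients. We therefore get \(\alpha_{\mathrm c}(\Delta_0)\ge1\).

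For general \(\theta\) I will use gauge transference \cite[Section~6.2]{BrannanGaoJunge2023II}. The map \(\pi:\mathcal R^d_\theta\to L^\infty(\mathbb T^d)\bar\otimes\mathcal R^d_\theta\), \(U^m\mapsto e^{i\langle m,\cdot\rangle}\otimes U^m\), is a trace-preserving normal \(*\)-homomorphism. It intertwines \(e^{-t\Delta_\theta}\) with \(e^{-t\Delta_0}\otimes\mathrm{id}\), and it intertwines \(E_\theta\) with \(\mathbb E\otimes\mathrm{id}\), since \(\int e^{i\langle m,s\rangle}ds=\delta_{m,0}\).

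The relative entropy and the entropy production are both expressed through the semigroup and the conditional expectation, and \(\pi\) preserves traces. Hence \(D(\rho\|E_\theta\rho)=D(\pi\rho\|(\mathbb E\otimes\mathrm{id})\pi\rho)\), and the same identity holds for the Fisher information/entropy production. The CMLSI for \(\Delta_0\otimes\mathrm{id}_{\mathcal R_\theta\otimes M_n}\) then pulls back to \(\Delta_\theta\otimes\mathrm{id}_{M_n}\).

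The one technical point, and the main obstacle I expect, is that the CMLSI for \(\Delta_0\) must be applied with a von Neumann algebra coefficient \(\mathcal R_\theta^d\) rather than just \(M_n\). I would handle this by approximation: the complete constant extends from matrix coefficients to finite (hyperfinite, since \(\mathcal R^d_\theta\) is amenable) coefficients. Alternatively, one can argue directly that the proof of Theorem~\ref{thm:sharp-circle} is a pointwise operator inequality that only uses trace properties, so it holds verbatim for coefficients in any finite von Neumann algebra. This gives \(\alpha_{\mathrm c}(\Delta_\theta)\ge1\).

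\emph{Upper bound.} I test \(\alpha(\Delta_\theta)\) on \(\rho=1+\varepsilon(U_1+U_1^*)\) with \(\varepsilon\to0\). To second order, the entropy is \(\frac{\varepsilon^2}{2}\tau(x^2)\) with \(x=U_1+U_1^*\). The entropy production is \(\tau(\Delta_\theta\rho\,\log\rho)\approx\varepsilon^2\tau(x\,\Delta_\theta x)=\varepsilon^2\tau(x^2)\), because \(x\) is a \(\Delta_\theta\)-eigenvector with eigenvalue \(1\), the spectral gap. With the normalization \(2\alpha D\le \mathrm{EP}\), this forces \(\alpha\le1\).

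Note that \(x\) generates a commutative subalgebra on which everything reduces to Weissler's circle inequality; this is why the scalar sharpness extends to every \(\theta\). This completes the chain of inequalities.
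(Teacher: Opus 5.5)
Your proposal is correct and follows the route the paper indicates. You use complete tensorization to get $\alpha_{\mathrm c}(\Delta_0)\ge 1$, then gauge transference via $U^m\mapsto e^{i\langle m,\cdot\rangle}\otimes U^m$, and you resolve the coefficient issue as in the paper's remark on finite tracial coefficients. Note that this extension is already needed in the tensorization step, since $L_\infty(\mathbb T^{d-1})$ is an infinite-dimensional coefficient algebra. Your linearization test with $\rho=1+\varepsilon(U_1+U_1^*)$ makes explicit the spectral-gap sharpness that the paper inherits from the scalar circle case.
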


Thus the sharp constant is independent of both the dimension and the
deformation parameter.  Positive CMLSI estimates were known previously,
whereas the sharp constant $1$ in \eqref{eq:quantum-torus-sharp-constant}, even
for the ordinary MLSI $\alpha(\Delta_{\theta})$ on a genuinely quantum torus, remained open. Our result here shows the entropy decay rate for amplified dynamics $H_t\otimes \text{id}_{M_n}$. To the best of the author's knowledge, the sharp LSI constant on quantum tori remains open.

In the following, we give the proof of Theorem \ref{thm:sharp-circle}. The tensorization and transference argument for Corollary \ref{cor:sharp-tori} is now standard, which we leave the details for the readers.\\

\noindent {\bf Acknowledgement.} 
LZ is partially supported by the National Natural Science Foundation
of China (grant No.~12401163) and the Department of Science and Technology
of Hubei Province (project Nos.~2025EHA041 and 2025AFA044).
The author is deeply grateful to my advisor Li Gao for his guidance,
many helpful discussions, and valuable advice on the writing of this manuscript.
The author acknowledges the use of AI tools during the exploratory stage of
this project.  All mathematical arguments and proofs in the final manuscript
were checked and written by the author.


\section{Proof of Theorem 1}
\label{sec:sharp-circle}

\subsection{Matrix-valued entropy and Fisher information}
\label{subsec:circle-setup}
Consider the unit circle equipped with normalized uniform measure
\[
    \mathbb T=\mathbb R/(2\pi\mathbb Z),
    \qquad
    d\mu(\theta)=\frac{d\theta}{2\pi}.
\]
We denote the standard Laplacian as
\[
    \Delta=-\partial_\theta^2,
    \qquad
    \Delta(z^k)=k^2z^k,   \qquad z(\theta)=e^{i\theta},
    \quad k\in\mathbb Z .
\]
Thus the heat semigroup \(H_t=e^{-t\Delta}\) satisfies
\[ H_t(z^k)=e^{-k^2t}z^k , k\in\mathbb Z.\]

Fix \(n\geq1\).  We work with matrix-valued functions
\[
    f:\mathbb T\longrightarrow M_n
\]
and let \(H_t\) act entrywise.  Throughout this section,
\(\operatorname{tr}\) denotes the standard matrix trace; its
normalization does not affect any constant in the discussion.  The 
matrix-valued expectation is
\[
    \mathbb{E}f=\int_{\mathbb T}f(\theta)\,d\mu(\theta)\in M_n,
\]
viewed also as a constant matrix-valued function on \(\mathbb T\).

For \(f\in L_\infty(\mathbb T;M_n)_+\) being pointwise positive, define the matrix-valued
relative entropy
\begin{equation}
    D(f\Vert \mathbb{E}f)
      =
      \int_{\mathbb T}\operatorname{tr}(f\log f)\,d\mu
       -\operatorname{tr}\bigl((\mathbb{E}f)\log(\mathbb{E}f)\bigr) =
      \int_{\mathbb T}
      \operatorname{tr}\bigl(
        f(\log f-\log \mathbb{E}f)
      \bigr)\,d\mu,
    \label{eq:circle-entropy}
\end{equation}
where \(0\log0=0\). This is exactly the classical-quantum relative entropy from the matrix-valued $f$ with respect to its mean $\mathbb{E}f$.

For a smooth, uniformly positive
\(g:\mathbb T\to M_n\), its Fisher information is
\begin{align}
    \mathcal I(g)
      =
      \int_{\mathbb T}
      \operatorname{tr}\bigl((\Delta g)
        (\log g-\log \mathbb{E}g)\bigr)\,d\mu
       =
      \int_{\mathbb T}
      \operatorname{tr}\bigl(-g''(\theta)\log g(\theta)\bigr)
      \,d\mu(\theta).
    \label{eq:circle-fisher}
\end{align}
The term involving \(\log \mathbb{E}g\) vanishes because
\(\mathbb{E}(\Delta g)=0\). 
\begin{definition}
The complete modified log Sobolev constant $\alpha_c(\Delta)$ of the semigroup $H_t=e^{-\Delta t}$ is then defined as the largest \(\alpha\) such that for all smooth positive matrix-valued  $g:\mathbb{T}\to M_n$ and all matrix sizes \(n\).
\begin{equation}
        2\alpha\,D(g\Vert \mathbb{E}g)\leq\mathcal I(g).
    \label{eq:circle-cmlsi}
\end{equation}
Namely, $$\alpha_c(\Delta):=\inf_{n\ge 1}\inf_{g\in C^\infty(\mathbb{T},M_n) } \frac{\mathcal I(g)}{ 2D(g\Vert \mathbb{E}g)} .$$
\end{definition}
Both matrix-valued entropy and Fisher information are homogeneous under \(f\mapsto cf\), \(c>0\), so it suffices to consider
the normalization to matrix-valued densities 
\(f\ge 0, \int_{\mathbb T}\operatorname{tr}(f)\,d\mu=1\). Note that the Fisher information is the derivative of entropy over the heat semigroup time evolution
\[ \mathcal I(g)=-\frac{d}{dt} D(H_t g \Vert \mathbb{E}g)|_{t=0}\ . \]
By standard argument of Gronwall's lemma, \eqref{eq:circle-cmlsi} is equivalent to 
\begin{equation}
    D(H_tf\Vert \mathbb{E}f)
      \leq e^{-2\alpha t}D(f\Vert \mathbb{E}f),
    \label{eq:circle-main-entropy-decay}
\end{equation} 
which is exponential decay of the matrix-valued entropy. 

In the following, we show that $\alpha_c(\Delta)=1$ for the circle. Indeed, we will show the exponential decay of matrix-valued Fisher information
\begin{equation}
    \mathcal I(H_tg)
      \leq e^{-2t}\mathcal I(g),
    \label{eq:circle-fisher-decay}
\end{equation}
which by integral over time yields the entropy decay \eqref{eq:circle-main-entropy-decay}.
The main point is the
following a matrix-valued version of
the Wirtinger inequality \cite{AmannEscher2009}.  Its proof uses the scalar bounds for the
logarithmic mean, a matrix logarithmic-center normalization, and
the Hilbert-valued Poincar\'e inequality.

\subsection{The logarithmic mean and the Fr\'echet logarithm}
\label{subsec:circle-logarithmic-mean}

For \(s,t>0\), let
\[
    \ell(s,t)=
    \begin{cases}
      \displaystyle\frac{s-t}{\log s-\log t},&s\neq t,\\[1.2ex]
      s,&s=t,
    \end{cases}
\]
be the logarithmic mean.  We shall repeatedly use
\begin{equation}
    \sqrt{st}\leq\ell(s,t)\leq\frac{s+t}{2}.
    \label{eq:log-mean-bounds}
\end{equation}
Indeed,
\[
    \ell(s,t)=\int_0^1s^u t^{1-u}\,du.
\]
The lower bound in \eqref{eq:log-mean-bounds} follows from Jensen's
inequality, and the upper bound follows by integrating the weighted
arithmetic--geometric mean inequality
\(s^ut^{1-u}\leq us+(1-u)t\).

Write
\[
    M_n^{++}
      =\{a\in(M_n)_{\mathrm{sa}}:
          a\geq\varepsilon1\text{ for some }\varepsilon>0\}.
\]
For \(a\in M_n^{++}\) and \(h=h^*\in M_n\), the
Fr\'echet derivative of the logarithm has the resolvent representation
\begin{equation}
    (D\log)_a(h):=\frac{d}{ds}\log(a+sh)|_{s=0}
      =\int_0^\infty(a+r1)^{-1}h(a+r1)^{-1}\,dr .
    \label{eq:frechet-log-resolvent}
\end{equation}
Assume the spectral decomposition of \(a\),
\[
    a=\sum_{i=1}^r\lambda_i p_i,
\]
 then Fr\'echet derivative also admits 
the double sum formula,
\begin{equation}
    (D\log)_a(h)
      =
      \sum_{i,j=1}^r
      \frac{1}{\ell(\lambda_i,\lambda_j)}\,p_i h p_j .
    \label{eq:frechet-log-spectral}
\end{equation}

\begin{lemma}[Two trace estimates]
\label{lem:frechet-log-trace}
Let \(a\in M_n^{++}\) and \(h=h^*\in M_n\).  Then
\begin{align}
    \operatorname{tr}\left(\bigl((D\log)_a(h)\bigr)^2\right)
      &\leq
      \operatorname{tr}(a^{-1}ha^{-1}h),
    \label{eq:frechet-log-upper}\\
    \operatorname{tr}\left((D\log)_a(h)a^{-1}h\right)
      &\geq
      \operatorname{tr}(a^{-1}ha^{-1}h).
    \label{eq:frechet-log-lower}
\end{align}
In particular, all quantities in
\eqref{eq:frechet-log-upper}--\eqref{eq:frechet-log-lower} are real.
\end{lemma}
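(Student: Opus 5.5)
We will work entirely in the spectral picture given by \eqref{eq:frechet-log-spectral}. Write $a=\sum_{i=1}^r\lambda_ip_i$ and set $h_{ij}=p_ihp_j$. Each of the three traces in the lemma then becomes a double sum over $(i,j)$ of a scalar weight times $\operatorname{tr}(h_{ij}h_{ji})$. Since $h=h^*$, we have $h_{ji}=h_{ij}^*$, so
\[
  \operatorname{tr}(h_{ij}h_{ji})=\operatorname{tr}(h_{ij}h_{ij}^*)=\|h_{ij}\|_2^2\geq0 .
\]
The nonnegativity of these weights is what turns the matrix inequalities into pointwise scalar inequalities between coefficients.

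\textbf{Expanding the three traces.} By orthogonality of the projections ($p_ip_k=\delta_{ik}p_i$) and cyclicity of the trace, we get:
\begin{align*}
  \operatorname{tr}\bigl(((D\log)_a(h))^2\bigr)
    &=\sum_{i,j}\frac{1}{\ell(\lambda_i,\lambda_j)^2}\,\|h_{ij}\|_2^2,\\
  \operatorname{tr}(a^{-1}ha^{-1}h)
    &=\sum_{i,j}\frac{1}{\lambda_i\lambda_j}\,\|h_{ij}\|_2^2,\\
  \operatorname{tr}\bigl((D\log)_a(h)\,a^{-1}h\bigr)
    &=\sum_{i,j}\frac{1}{\ell(\lambda_i,\lambda_j)\,\lambda_j}\,\|h_{ij}\|_2^2 .
\end{align*}
For the third identity, the term from $p_ihp_j$ in $(D\log)_a(h)$ multiplied by $a^{-1}h=\sum_k\lambda_k^{-1}p_kh$ leaves only $k=j$. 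Its trace is $\operatorname{tr}(p_ihp_jhp_i)=\|h_{ij}\|_2^2$. Each of these expressions is a real sum, which gives the final reality claim.

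\textbf{Proof of \eqref{eq:frechet-log-upper}.} We need $\ell(\lambda_i,\lambda_j)^{-2}\leq(\lambda_i\lambda_j)^{-1}$ termwise. This is exactly the lower bound $\sqrt{st}\leq\ell(s,t)$ in \eqref{eq:log-mean-bounds}, squared and inverted.

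\textbf{Proof of \eqref{eq:frechet-log-lower}.} The third sum does not have symmetric weights termwise. We symmetrize it using $\|h_{ij}\|_2=\|h_{ji}\|_2$ and $\ell(s,t)=\ell(t,s)$, replacing its weight by
\[
  \frac{1}{2\ell(\lambda_i,\lambda_j)}\left(\frac1{\lambda_i}+\frac1{\lambda_j}\right)
  =\frac{\lambda_i+\lambda_j}{2\,\ell(\lambda_i,\lambda_j)\,\lambda_i\lambda_j}.
\]
The required termwise inequality becomes $(\lambda_i+\lambda_j)/2\geq\ell(\lambda_i,\lambda_j)$. This is the upper bound in \eqref{eq:log-mean-bounds}, and the case $i=j$ holds with equality.

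The only delicate step is this symmetrization in \eqref{eq:frechet-log-lower}. A termwise comparison without symmetrizing would need $\lambda_j\leq\ell(\lambda_i,\lambda_j)$, which is false in general; after pairing $(i,j)$ with $(j,i)$, the arithmetic-mean bound handles it. Everything else is bookkeeping with orthogonal projections.
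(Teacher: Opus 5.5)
Your proposal is correct and follows the paper's proof essentially verbatim. You expand all three traces in the spectral basis of $a$ with symmetric nonnegative coefficients $\|p_ihp_j\|_2^2$ (the paper's $c_{ij}$), obtain the first inequality termwise from $\sqrt{st}\le\ell(s,t)$, and obtain the second by symmetrizing over $(i,j)\leftrightarrow(j,i)$ and applying $\ell(s,t)\le (s+t)/2$.
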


\begin{proof}
Using the spectral decomposition above, put
\[
    c_{ij}=\operatorname{tr}(p_i h p_j h).
\]
Since \(h=h^*\), one has \(c_{ij}=c_{ji}\geq0\).  Formula
\eqref{eq:frechet-log-spectral} and trace cyclicity give
\begin{align*}
    \operatorname{tr}\left(\bigl((D\log)_a(h)\bigr)^2\right)
      &=
      \sum_{i,j=1}^r
      \frac{c_{ij}}{\ell(\lambda_i,\lambda_j)^2},\\
    \operatorname{tr}(a^{-1}ha^{-1}h)
      &=
      \sum_{i,j=1}^r\frac{c_{ij}}{\lambda_i\lambda_j}.
\end{align*}
The first inequality in \eqref{eq:log-mean-bounds} proves
\eqref{eq:frechet-log-upper}.

Similarly,
\[
    \operatorname{tr}\left((D\log)_a(h)a^{-1}h\right)
      =
      \sum_{i,j=1}^r
      \frac{c_{ij}}
           {\lambda_j\ell(\lambda_i,\lambda_j)}.
\]
By the symmetry of \(c_{ij}\), the last sum equals
\[
    \sum_{i,j=1}^r
      \frac{c_{ij}}{2\ell(\lambda_i,\lambda_j)}
      \left(\frac1{\lambda_i}+\frac1{\lambda_j}\right).
\]
The upper bound for the logarithmic mean gives
\[
    \frac{1}{2\ell(\lambda_i,\lambda_j)}
    \left(\frac1{\lambda_i}+\frac1{\lambda_j}\right)
      \geq\frac1{\lambda_i\lambda_j},
\]
which proves \eqref{eq:frechet-log-lower}.
\end{proof}

We also record a tracial differentiation identity that will be used
below.

\begin{lemma}[Trace identity]
\label{lem:frechet-log-trace-identity}
Let \(a\in M_n^{++}\), \(h=h^*\in M_n\), and let \(F\) be
a bounded Borel function on the spectrum set $\text{spec}(a)$ of $a$.  Then
\begin{equation}
    \operatorname{tr}\bigl(F(a)(D\log)_a(h)\bigr)
      =
    \operatorname{tr}\bigl(F(a)a^{-1}h\bigr).
    \label{eq:frechet-log-trace-identity}
\end{equation}
\end{lemma}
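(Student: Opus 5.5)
The plan is to use the double-sum formula \eqref{eq:frechet-log-spectral} and trace cyclicity, as in the proof of Lemma~\ref{lem:frechet-log-trace}. Write the spectral decomposition \(a=\sum_{i=1}^r\lambda_i p_i\) with the \(\lambda_i\) distinct and positive. The function \(F\) then only matters through its values on \(\operatorname{spec}(a)=\{\lambda_1,\dots,\lambda_r\}\), and \(F(a)=\sum_k F(\lambda_k)p_k\). Substituting this and \eqref{eq:frechet-log-spectral} into the left-hand side of \eqref{eq:frechet-log-trace-identity} gives
\[
\operatorname{tr}\bigl(F(a)(D\log)_a(h)\bigr)
 =\sum_{k,i,j}\frac{F(\lambda_k)}{\ell(\lambda_i,\lambda_j)}\operatorname{tr}(p_k p_i h p_j).
\]

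The key step is that only diagonal terms survive. By cyclicity, \(\operatorname{tr}(p_kp_ihp_j)=\operatorname{tr}(p_jp_kp_ih)\). Orthogonality of the spectral projections forces \(k=i=j\), so the sum collapses to
\[
\sum_i \frac{F(\lambda_i)}{\ell(\lambda_i,\lambda_i)}\operatorname{tr}(p_ih)=\sum_i \frac{F(\lambda_i)}{\lambda_i}\operatorname{tr}(p_ih),
\]
using \(\ell(s,s)=s\). On the other side, \(F(a)a^{-1}=\sum_i F(\lambda_i)\lambda_i^{-1}p_i\), so \(\operatorname{tr}(F(a)a^{-1}h)\) is exactly the same expression. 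This proves the identity.

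An alternative route uses the resolvent representation \eqref{eq:frechet-log-resolvent}. Since \(F(a)\) commutes with \((a+r1)^{-1}\), cyclicity gives \(\operatorname{tr}(F(a)(a+r1)^{-2}h)\). Integrating \(\int_0^\infty(\lambda+r)^{-2}\,dr=\lambda^{-1}\) in the functional calculus again yields \(\operatorname{tr}(F(a)a^{-1}h)\). I would mention this only as a check.

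There is no real obstacle. The one point needing care is that the double-sum formula \eqref{eq:frechet-log-spectral} is indexed by distinct eigenvalues with their spectral projections, so the orthogonality \(p_jp_k=\delta_{jk}p_k\) is legitimate. Boundedness of \(F\) is irrelevant here since the spectrum is finite; the hypothesis only ensures \(F(a)\) is defined. Self-adjointness of \(h\) is not needed either.
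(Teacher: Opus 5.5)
Your proof is correct. Your primary route is not the paper's, though: the paper's proof is exactly what you offer ``only as a check.'' It inserts the resolvent representation \eqref{eq:frechet-log-resolvent} and uses that $F(a)$ commutes with $(a+r1)^{-1}$, together with cyclicity, to reach $\operatorname{tr}(F(a)(a+r1)^{-2}h)$. It then integrates $\int_0^\infty (x+r)^{-2}\,dr=x^{-1}$.

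Your main argument instead substitutes the double-sum formula \eqref{eq:frechet-log-spectral} and uses orthogonality of the spectral projections to kill every off-diagonal term. As a result only the diagonal values $\ell(\lambda_i,\lambda_i)=\lambda_i$ of the logarithmic-mean kernel survive. That computation is elementary and parallels the proof of Lemma~\ref{lem:frechet-log-trace}. It also shows the mechanism clearly: pairing against anything that commutes with $a$ only sees the diagonal of the divided-difference kernel, namely $\log'(\lambda)=1/\lambda$.

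What the paper's resolvent route buys is robustness. It never decomposes $a$ into finitely many eigenprojections, so it carries over verbatim to the finite tracial von Neumann algebra setting of Remark~\ref{rem:finite-tracial-extension}. There the spectrum need not be discrete, and your diagonal-collapse step would need a separate argument at the level of double operator integrals.
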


\begin{proof}
Using \eqref{eq:frechet-log-resolvent}, trace cyclicity, and the fact
that \(F(a)\) commutes with every resolvent of \(a\), we obtain
\begin{align*}
    \operatorname{tr}\bigl(F(a)(D\log)_a(h)\bigr)
      =
      \int_0^\infty
      \operatorname{tr}\bigl(
        F(a)(a+r1)^{-2}h
      \bigr)\,dr
      =
      \operatorname{tr}\bigl(F(a)a^{-1}h\bigr),
\end{align*}
where we use the elementary scalar identity
\(\int_0^\infty(x+r1)^{-2}\,dr=x^{-1}\) for $x>0$.
\end{proof}

\subsection{A nonlinear Wirtinger inequality}
\label{subsec:circle-nonlinear-wirtinger}

The normalization needed in the nonlinear argument is the matrix
analogue of subtracting the mean of a scalar function.

\begin{lemma}[Matrix logarithmic center]
\label{lem:circle-log-center}
Let \(a:\mathbb T\to M_n^{++}\) be norm continuous.  There is
strictly positive \(p\in M_n^{++}\) such that
\begin{equation}
    \int_{\mathbb T}
      \log\bigl(p^{-1/2}a(\theta)p^{-1/2}\bigr)
      \,d\mu(\theta)=0,
    \label{eq:circle-log-center}
\end{equation}
where the integral is a norm-Bochner integral.
\end{lemma}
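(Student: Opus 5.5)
The existence of $p$ is a fixed-point (or variational) problem on the cone $M_n^{++}$. The cleanest route is to realize \eqref{eq:circle-log-center} as the critical-point equation of a strictly geodesically convex, coercive functional. I will consider
\[
    \Phi(p)=\int_{\mathbb T}\delta\bigl(p,a(\theta)\bigr)^2\,d\mu(\theta),
    \qquad
    \delta(p,b)=\bigl\|\log\bigl(p^{-1/2}bp^{-1/2}\bigr)\bigr\|_{2},
\]
where $\delta$ is the Riemannian (affine-invariant) distance on $M_n^{++}$ and $\|\cdot\|_2$ is the Hilbert--Schmidt norm. A minimizer of $\Phi$ is a Karcher (Riemannian barycenter) mean of the family $a(\theta)$ with respect to $\mu$. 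The standard first-order condition for the Karcher mean is exactly
\[
    \int\log\bigl(p^{-1/2}a(\theta)p^{-1/2}\bigr)\,d\mu=0,
\]
because the gradient of $\frac12\delta(\cdot,b)^2$ at $p$ is $-p^{1/2}\log(p^{-1/2}bp^{-1/2})p^{1/2}$, i.e.\ minus the Riemannian logarithm $\mathrm{Log}_p(b)$.

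\textbf{Step 1 (well-definedness).} By norm continuity and compactness of $\mathbb T$, there are constants $0<m\le M$ with $m1\le a(\theta)\le M1$ for all $\theta$. Hence $\theta\mapsto\log(p^{-1/2}a(\theta)p^{-1/2})$ is norm continuous for each fixed $p$, so all Bochner integrals exist, and $\Phi$ is finite and continuous on $M_n^{++}$.

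\textbf{Step 2 (existence of a minimizer).} I will show coercivity. If $p\notin[\tfrac{m}{K}1,KM1]$ for large $K$, then some eigenvalue of $p^{-1/2}a(\theta)p^{-1/2}$ is at most $1/K$ or at least $K$. This uses $p^{-1/2}ap^{-1/2}\ge m p^{-1}$ together with the bounds on $a$. Consequently $\delta(p,a(\theta))\ge\log K$ uniformly in $\theta$. The sublevel sets of $\Phi$ are therefore contained in the compact set $\{\tfrac mK\le p\le KM\}$, and a minimizer exists.

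\textbf{Step 3 (first-order condition).} Let $p$ be the minimizer and take the variation $p_s=p^{1/2}e^{sx}p^{1/2}$ with $x=x^*$. By congruence invariance, $\delta(p_s,b)=\|\log(e^{-sx/2}ce^{-sx/2})\|_2$ with $c=p^{-1/2}bp^{-1/2}$. Differentiating at $s=0$, using Lemma~\ref{lem:frechet-log-trace-identity} with $F(c)=\log c$, gives
\[
    \frac{d}{ds}\Big|_{s=0}\operatorname{tr}\bigl(\log(e^{-sx/2}ce^{-sx/2})\bigr)^2
      =2\operatorname{tr}\bigl(\log c\,(D\log)_c(-\tfrac12(xc+cx))\bigr)
      =-2\operatorname{tr}(x\log c).
\]
The last equality uses cyclicity and the fact that $\log c$ commutes with $c$. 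Differentiation under the integral is justified by the uniform bounds from Step 1. Minimality then gives $\operatorname{tr}\bigl(x\int\log(p^{-1/2}a\,p^{-1/2})\,d\mu\bigr)=0$ for every self-adjoint $x$. Since the integral is itself self-adjoint, it must vanish, which is \eqref{eq:circle-log-center}.

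The main obstacle is the coercivity bound in Step 2, specifically making the eigenvalue estimate for $p^{-1/2}ap^{-1/2}$ clean. Everything else reduces to trace identities already available in Lemma~\ref{lem:frechet-log-trace-identity}. Uniqueness, from strict geodesic convexity, is not needed.
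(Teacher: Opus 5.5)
Your proof is correct, but it takes a different route from the paper's.

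\textbf{The paper's route.} The paper never constructs $p$. It pushes $\mu$ forward to the compactly supported measure $\nu=a_*\mu$ on $(M_n^{++},d_T)$ with the Thompson metric and checks $\nu\in\mathcal P^1$. It then quotes Lawson's Karcher-barycenter theorem to obtain a (unique) solution of the Karcher equation.

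\textbf{Your route.} You produce $p$ as a minimizer of the variance $\Phi$ for the affine-invariant trace metric.
\begin{itemize}
\item Existence comes from your coercivity bound. Once $p\notin[\tfrac{m}{K}1,KM1]$, one has $\delta(p,a(\theta))\ge\log K$ uniformly in $\theta$. Taking $(\log K)^2>\Phi(1)$ therefore traps the infimum in a compact subset of $M_n^{++}$.
\item The centering identity is then the Euler--Lagrange equation. Your first-variation computation, via congruence invariance and Lemma~\ref{lem:frechet-log-trace-identity}, correctly gives $-2\operatorname{tr}(x\log c)$.
\item Testing with $x=\int\log(p^{-1/2}ap^{-1/2})\,d\mu$ finishes the argument.
\end{itemize}

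\textbf{What each buys.} Your argument is self-contained and uses only tools already in the paper. It also makes transparent why this normalization is the matrix analogue of subtracting the mean. The paper's citation is shorter and gives uniqueness for free. More importantly here, it does not use local compactness.

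Your Step~2 is intrinsically finite-dimensional. The paper needs coefficients in finite tracial von Neumann algebras for the quantum-torus transference (Remark~\ref{rem:finite-tracial-extension}). There, norm-closed order intervals are not compact, so you would need a separate existence argument. Lawson's theorem is stated for unital $C^*$-algebras and covers that case as it stands.
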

\begin{proof}
Let \(\Omega=M_n^{++}\), equipped with the Thompson metric
\(d_T(x,y)=\|\log(x^{-1/2}yx^{-1/2})\|\), and let
\(\nu=a_*\mu\) be the push-forward of \(\mu\) under \(a\). Since
\(a\) is norm continuous and \(\mathbb T\) is compact, its image is
compact in \(\Omega\). In particular, there exist
\(0<\varepsilon\leq M<\infty\) such that
\(\varepsilon I_n\leq a(\theta)\leq MI_n\) for every
\(\theta\in\mathbb T\). Hence \(\nu\) is a compactly supported Borel
probability measure on \(\Omega\), and
\[
    \int_\Omega d_T(x,I_n)\,d\nu(x)
      =\int_{\mathbb T}\|\log a(\theta)\|\,d\mu(\theta)
      \leq \max\{|\log\varepsilon|,|\log M|\}<\infty.
\]
Thus \(\nu\in\mathcal P^1(\Omega,d_T)\).

By the Karcher-barycenter theorem
\cite[Theorem~7.4]{Lawson2020Karcher}, there exists a unique
\(p=\Lambda(\nu)\in\Omega\) satisfying
\(\int_\Omega\log(p^{-1/2}xp^{-1/2})\,d\nu(x)=0\). Since
\(\nu=a_*\mu\), the change-of-variables formula for push-forward
measures gives \eqref{eq:circle-log-center}. If the Karcher equation
is stated using the inverse convention, the same identity follows
from
\(\log(p^{1/2}x^{-1}p^{1/2})
=-\log(p^{-1/2}xp^{-1/2})\).

Finally, the integrand in \eqref{eq:circle-log-center} is norm
continuous on the compact space \(\mathbb T\), so the integral is a
norm-Bochner integral.
\end{proof}
For a smooth \(a:\mathbb T\to M_n^{++}\), write
\[
    b=p^{-1/2}ap^{-1/2},
    \qquad
    u_a=a^{-1}a',
    \qquad
    u_b=b^{-1}b',
\]
then
\begin{equation}
    u_b=p^{1/2}u_ap^{-1/2},
    \qquad
    u_b'=p^{1/2}u_a'p^{-1/2}.
    \label{eq:circle-congruence-log-derivative}
\end{equation}
Consequently,
\begin{equation}
    \operatorname{tr}(u_b^2)=\operatorname{tr}(u_a^2),
    \qquad
    \operatorname{tr}((u_b')^2)
      =\operatorname{tr}((u_a')^2).
    \label{eq:circle-congruence-energy}
\end{equation}

\begin{theorem}[Logarithmic-mean Wirtinger inequality]
\label{thm:circle-logarithmic-wirtinger}
Let \(a:\mathbb T\to M_n^{++}\) be a \(C^2\)-loop and put
\[
    u=a^{-1}a'.
\]
Then
\begin{equation}
    \int_{\mathbb T}\operatorname{tr}((u')^2)\,d\mu
      \geq
    \int_{\mathbb T}\operatorname{tr}(u^2)\,d\mu .
    \label{eq:circle-nonlinear-wirtinger}
\end{equation}
\end{theorem}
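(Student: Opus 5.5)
The plan is to reduce to a loop whose logarithm has mean zero. Then the scalar Wirtinger (Poincar\'e) inequality is applied to \(\log b\) in the Hilbert space \((M_n)_{\mathrm{sa}}\) with the Hilbert--Schmidt inner product \(\langle x,y\rangle=\operatorname{tr}(xy)\). Lemma~\ref{lem:frechet-log-trace} supplies the comparison between \(\log b\) and \(u\), in both directions.

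\emph{Step 1 (normalization).} By Lemma~\ref{lem:circle-log-center}, choose \(p\in M_n^{++}\) with \(\int_{\mathbb T}\log b\,d\mu=0\), where \(b=p^{-1/2}ap^{-1/2}\). By \eqref{eq:circle-congruence-energy}, both sides of \eqref{eq:circle-nonlinear-wirtinger} are unchanged if \(a\) is replaced by \(b\). So we may assume \(a=b\) and write \(u=b^{-1}b'\) and \(L=\log b\). Then \(L\) is a \(C^1\) (in fact \(C^2\)) self-adjoint loop with \(\int L\,d\mu=0\), and \(L'=(D\log)_b(b')\).

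\emph{Step 2 (self-adjoint representatives).} The matrices \(u\) and \(u'\) are not self-adjoint, so we pass to similar self-adjoint matrices. Put \(w=b^{-1/2}b'b^{-1/2}\), so that \(b^{1/2}ub^{-1/2}=w\) and \(\operatorname{tr}(u^2)=\operatorname{tr}(w^2)\ge0\). Since \(u'=b^{-1}b''-b^{-1}b'b^{-1}b'\), we get
\[
v:=b^{1/2}u'b^{-1/2}=b^{-1/2}b''b^{-1/2}-w^2,
\]
which is self-adjoint, and \(\operatorname{tr}((u')^2)=\operatorname{tr}(v^2)\ge0\). Because \(L\) commutes with \(b\), we also have \(\operatorname{tr}(Lu')=\operatorname{tr}(Lv)\).

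\emph{Step 3 (the chain).} Apply \eqref{eq:frechet-log-lower} pointwise with \(h=b'\). This gives \(\operatorname{tr}(L'u)\ge\operatorname{tr}(u^2)\). Integrating over \(\mathbb T\) and integrating by parts on the loop,
\[
\int\operatorname{tr}(u^2)\,d\mu\le\int\operatorname{tr}(L'u)\,d\mu=-\int\operatorname{tr}(Lu')\,d\mu=-\int\operatorname{tr}(Lv)\,d\mu .
\]
The Cauchy--Schwarz inequality in \(L_2(\mathbb T;(M_n)_{\mathrm{sa}},\operatorname{tr})\) bounds the right-hand side by \(\|L\|_2\|v\|_2\). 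Since \(\int L\,d\mu=0\), the Hilbert-valued Wirtinger inequality (Parseval on Fourier coefficients, using \(k^2\ge1\) for \(k\neq0\)) gives \(\|L\|_2^2\le\|L'\|_2^2\). Then \eqref{eq:frechet-log-upper} with \(h=b'\) gives \(\operatorname{tr}((L')^2)\le\operatorname{tr}(b^{-1}b'b^{-1}b')=\operatorname{tr}(u^2)\). Altogether,
\[
\int\operatorname{tr}(u^2)\,d\mu\le\Bigl(\int\operatorname{tr}(u^2)\,d\mu\Bigr)^{1/2}\Bigl(\int\operatorname{tr}((u')^2)\,d\mu\Bigr)^{1/2}.
\]
Dividing by the square root (the case \(\int\operatorname{tr}(u^2)\,d\mu=0\) is trivial) and squaring yields \eqref{eq:circle-nonlinear-wirtinger}.

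\emph{Main obstacle.} I expect the delicate point to be the bookkeeping that turns the non-self-adjoint quantities \(\operatorname{tr}(Lu')\) and \(\operatorname{tr}((u')^2)\) into genuine Hilbert--Schmidt quantities. The device is the conjugation by \(b^{1/2}\) in Step 2, which relies essentially on \(L\) commuting with \(b\). It is exactly here that normalizing by the logarithmic center (rather than by the mean of \(b\)) is needed: it makes the mean-zero function \(\log b\) itself, not some other function, enter the Poincar\'e inequality. Justifying the integration by parts requires only the \(C^2\) regularity of \(b\), hence of \(L\) and \(u\), on the compact loop.
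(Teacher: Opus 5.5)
Your proposal is correct and follows the paper's proof essentially step by step. The shared steps are: the logarithmic-center normalization; the self-adjoint similarity representatives of \(u\) and \(u'\); the chain \(\int\operatorname{tr}(L^2)\,d\mu\le\int\operatorname{tr}((L')^2)\,d\mu\le\int\operatorname{tr}(u^2)\,d\mu\) via Poincar\'e and \eqref{eq:frechet-log-upper}; the bound \(\int\operatorname{tr}(u^2)\,d\mu\le-\int\operatorname{tr}(Lu')\,d\mu\) via \eqref{eq:frechet-log-lower} and periodicity; and a final Cauchy--Schwarz. The only difference is cosmetic: you integrate \((\operatorname{tr}(Lu))'\) directly, whereas the paper introduces \(r=\tfrac12\operatorname{tr}(L^2)\) and uses Lemma~\ref{lem:frechet-log-trace-identity} to identify \(r'=\operatorname{tr}(Lu)\), so your version is slightly more economical.
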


\begin{proof}
By Lemma~\ref{lem:circle-log-center} and
\eqref{eq:circle-congruence-energy}, we may replace \(a\) by a fixed
congruence and assume
\begin{equation}
    X=\log a,
    \qquad
    \int_{\mathbb T}X\,d\mu=0.
    \label{eq:circle-centered-log}
\end{equation}
Define
\[
    V=\int_{\mathbb T}\operatorname{tr}(X^2)\,d\mu,
    \qquad
    E=\int_{\mathbb T}\operatorname{tr}(u^2)\,d\mu,
    \qquad
    A=\int_{\mathbb T}\operatorname{tr}((u')^2)\,d\mu.
\]
The apparent non-self-adjointness of \(u\) or \(u'\) causes no positivity
problem.  Although $u$ and $u'$ are generally not self-adjoint, define
\begin{equation}\label{eq:q-similar-selfadjoint}
 \begin{aligned}
  Y&:=a^{-1/2}a'a^{-1/2}=Y^*,\\
  Z&:=a^{-1/2}(a''-a'a^{-1}a')a^{-1/2}=Z^*.
 \end{aligned}
\end{equation}
These are the self-adjoint representatives of $u$ and $u'$, respectively, under similarity.
Then one has \(u=a^{-1/2}Ya^{1/2}\) and \(u'=a^{-1/2}Za^{1/2}\), and hence
  \begin{align*}
      \operatorname{tr}(u^2)=\operatorname{tr}(Y^2)\geq0,\qquad
      \operatorname{tr}(u'^2)=\operatorname{tr}(Z^2)\geq0.
  \end{align*}

Equip the real vector space \((M_n)_{\mathrm{sa}}\) with the
Hilbert--Schmidt inner product. Consider the matrix-valued Fourier expansion,
\[ X(\theta)=\sum_{k\in \mathbb{Z}} \widehat{X}(k)e^{ik\theta},\quad  \widehat{X}(k)=\int_{\mathbb{T}} X(\theta)e^{-ik\theta}d\mu(\theta).\]
Since \(X\) has mean zero, the mean-zero condition gives
$\widehat X(0)=0$, and Parseval's identity gives
\[
 \int_{\mathbb{T}}\operatorname{tr}(X^2)\,d\mu
 =\sum_{k\ne0}\|\widehat X(k)\|_2^2,
 \qquad
 \int_{\mathbb{T}}\operatorname{tr}((X')^2)\,d\mu
 =\sum_{k\ne0}k^2\|\widehat X(k)\|_2^2,
\]
where $\|\cdot\|_2$ is the Hilbert-Schmidt 2 norm. Since $k^2\ge1$ for $k\ne0$, the gap-one Hilbert-valued Poincar\'e
inequality follows.  On the other hand, 
Lemma~\ref{lem:frechet-log-trace} gives
\begin{align}
    V=
      \int_{\mathbb T}\operatorname{tr}(X^2)\,d\mu
      &\leq
      \int_{\mathbb T}\operatorname{tr}((X')^2)\,d\mu \notag\\
      &=
      \int_{\mathbb T}
      \operatorname{tr}\left(
        \bigl((D\log)_a(a')\bigr)^2
      \right)\,d\mu\leq \int_{\mathbb T}\operatorname{tr}(a^{-1}a'a^{-1}a')\,d\mu
      = E.
    \label{eq:circle-V-less-E}
\end{align}
Define the function $r:\mathbb{T}\to \mathbb{R}$,
\[
    r(\theta)=\frac12\operatorname{tr}(X(\theta)^2).
\]
By Lemma~\ref{lem:frechet-log-trace-identity}, used with
\(F=\log\), we have
\begin{equation}
    r'
      =\operatorname{tr}(XX')
      =\operatorname{tr}(Xu).
    \label{eq:circle-r-prime}
\end{equation}
Differentiating once more and applying
\eqref{eq:frechet-log-lower} with \(h=a'\), we obtain
\begin{align}
    r''
      =
      \operatorname{tr}\bigl((D\log)_a(a')u\bigr)
      +\operatorname{tr}(Xu')
      \geq
      \operatorname{tr}(u^2)+\operatorname{tr}(Xu'),
    \label{eq:circle-r-second}
\end{align}
Notice that \(\int_{\mathbb{T}} r''\,d\mu=0\) since \(r\) is periodic, the  integration of
\eqref{eq:circle-r-second} gives
\begin{equation}
    E=\int_{\mathbb T}\operatorname{tr}(u^2)d\mu
      \leq
      -\int_{\mathbb T}\operatorname{tr}(Xu')\,d\mu.
    \label{eq:circle-E-cross}
\end{equation}

It remains to put the last expression into a manifestly Hilbertian
form. Because \(X=\log a\) commutes with every Borel function of \(a\),
trace cyclicity yields
\begin{equation}
    \operatorname{tr}((u')^2)=\operatorname{tr}(Z^2),
    \qquad
    \operatorname{tr}(Xu')=\operatorname{tr}(XZ).
    \label{eq:circle-similarity-CS}
\end{equation}
In particular, \(A=\int_{\mathbb T}\operatorname{tr}((u')^2)d\mu \geq0\).  The tracial Cauchy--Schwarz
inequality, \eqref{eq:circle-E-cross}, and
\eqref{eq:circle-V-less-E} now imply
\[
    E
      \leq
      \left|
        \int_{\mathbb T}\operatorname{tr}(XZ)\,d\mu
      \right|
      \leq\sqrt{VA}
      \leq\sqrt{EA}.
\]
If \(E>0\), squaring and dividing by \(E\) gives
\(A\geq E\).  If \(E=0\), the same
conclusion follows from \(A\geq0\).
\end{proof}

Lawson's theorem the above is used only to obtain a solution of the centering
equation.  After this normalization, the proof is entirely analytic.

\subsection{Resolvent dissipation and Fisher-information decay}
\label{subsec:circle-fisher-decay}

Let \(g:\mathbb T\to M_n^{++}\) be smooth and set
\(g_t=H_tg\).  For \(s\geq0\), define
\[
    k_{t,s}=g_t+s1,
    \qquad
    u_{t,s}=k_{t,s}^{-1}k'_{t,s},
\]
and introduce the resolvent energy
\begin{equation}
    \mathcal E_s(t)
      =
      \int_{\mathbb T}\operatorname{tr}(u_{t,s}^2)\,d\mu.
    \label{eq:circle-resolvent-energy}
\end{equation}
As in the proof of Theorem~\ref{thm:circle-logarithmic-wirtinger},
\(u_{t,s}\) is similar to a self-adjoint element, so
\(\mathcal E_s(t)\geq0\). In the following, if no confusion, we will often use $k=k_{t,s}, u=u_{t,s}$ for the ease of the notations.

\begin{proposition}[Resolvent-energy dissipation]
\label{prop:circle-resolvent-dissipation}
For every \(s\geq0\),
\begin{equation}
    \frac{d}{dt}\mathcal E_s(t)
      =
      -2\int_{\mathbb T}
        \operatorname{tr}\bigl((u_{t,s}')^2\bigr)\,d\mu
      \leq-2\mathcal E_s(t).
    \label{eq:circle-resolvent-dissipation}
\end{equation}
Consequently,
\begin{equation}
    \mathcal E_s(t)\leq e^{-2t}\mathcal E_s(0).
    \label{eq:circle-resolvent-decay}
\end{equation}
\end{proposition}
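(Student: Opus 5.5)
The plan is a direct computation of \(\partial_t u\) along the heat flow, followed by an application of Theorem~\ref{thm:circle-logarithmic-wirtinger} and Gronwall's lemma.

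\emph{Preliminaries.} First we check that everything is well defined. Since \(g\geq\varepsilon 1\) pointwise for some \(\varepsilon>0\) and \(H_t\) is given by convolution with a positive probability kernel, we get \(g_t\geq\varepsilon1\). Hence \(k_{t,s}\geq(\varepsilon+s)1\) for all \(t,s\geq0\), and \(k_{t,s}\) is a smooth loop in \(M_n^{++}\), jointly smooth in \((t,\theta)\). The only dynamical input is \(\partial_t k=-\Delta g_t=k''\); the shift by \(s1\) does not affect this.

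\emph{Time derivative of \(u\).} Write \(u=k^{-1}k'\) and note that \(u'=-k^{-1}k'k^{-1}k'+k^{-1}k''\), so that
\[
k^{-1}k''=u'+u^2 .
\]
Differentiating \(k^{-1}k''\) once more in \(\theta\) gives
\[
k^{-1}k'''=(u'+u^2)'+u(u'+u^2).
\]
Then \(\partial_tu=-k^{-1}k''\,k^{-1}k'+k^{-1}k'''\), and substituting the two identities above yields
\[
\partial_t u=(u'+u^2)'+[u,u'+u^2]=(u'+u^2)'+[u,u'] .
\]

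\emph{Derivative of the energy.} Differentiating under the integral (justified by joint smoothness and compactness),
\[
\frac{d}{dt}\mathcal E_s(t)=2\int\operatorname{tr}(u\,\partial_tu)\,d\mu .
\]
The commutator contribution \(\operatorname{tr}(u[u,u'])\) vanishes by trace cyclicity. For the remaining term we integrate by parts on the circle:
\[
2\int\operatorname{tr}\bigl(u(u'+u^2)'\bigr)\,d\mu=-2\int\operatorname{tr}\bigl((u')^2\bigr)\,d\mu-2\int\operatorname{tr}(u'u^2)\,d\mu .
\]
By cyclicity, \(\operatorname{tr}(u'u^2)=\tfrac13(\operatorname{tr}u^3)'\), which integrates to zero over \(\mathbb T\) by periodicity. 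This gives the equality in \eqref{eq:circle-resolvent-dissipation}.

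\emph{Conclusion.} The inequality in \eqref{eq:circle-resolvent-dissipation} follows from Theorem~\ref{thm:circle-logarithmic-wirtinger} applied to the \(C^2\)-loop \(a=k_{t,s}\) at each fixed \(t\). Finally, \eqref{eq:circle-resolvent-decay} follows from Gronwall, since \(\frac{d}{dt}\bigl(e^{2t}\mathcal E_s(t)\bigr)\leq0\).

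The main obstacle is the algebra for \(\partial_tu\): \(u\) is not self-adjoint, so every simplification must go through trace cyclicity rather than positivity. In particular, one has to verify carefully that both the commutator term and the cubic term \(\operatorname{tr}(u'u^2)\) really drop out.
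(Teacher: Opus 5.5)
Your proposal is correct and follows the paper's proof essentially step by step. The paper also derives \(\partial_t u=v'+[u,v]\) with \(v=k^{-1}k''=u'+u^2\), removes the commutator term and the cubic term \(\operatorname{tr}(u'u^2)=\tfrac13(\operatorname{tr}u^3)'\) by trace cyclicity and periodicity, and then applies Theorem~\ref{thm:circle-logarithmic-wirtinger} to \(k_{t,s}\) followed by Gronwall; your remarks on the uniform positivity of \(k_{t,s}\) and on differentiating under the integral supply routine details that the paper leaves implicit.
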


\begin{proof}
Put \(v=k^{-1}k''\), use $\partial_t$ for the time derivative, and retain the
prime $'$ for the circle derivative with respect to $\theta$.  The heat equation \(\partial_t k=k''\)
gives
\[
    u'=v-u^2,
    \qquad
    \partial_t u=v'+[u,v].
\]
\begin{align*}
    \partial_t u
      &=-k^{-1}(\partial_t k)\,k^{-1}k'+k^{-1}(\partial_t k')
        =-k^{-1}k''k^{-1}k'+k^{-1}k''',\\
    v'
      &=-k^{-1}k'k^{-1}k''+k^{-1}k''',\\
    \partial_t u-v'
      &=-vu+uv=[u,v].
\end{align*}
Trace cyclicity eliminates the commutator term 
\[\operatorname{tr}(u[u,v])=\operatorname{tr}(u^2v-uvu)=0.\]  Periodic integration
by parts then gives
\begin{align*}
    \frac12\frac{d}{dt}\mathcal E_s(t)
      &=
      \int_{\mathbb T}\operatorname{tr}(u(\partial_t u))\,d\mu
      \quad\text{(substitute $\partial_t u = v' + [u,v]$)}\\
      &=
      \int_{\mathbb T}\operatorname{tr}(uv')\,d\mu
      \quad\text{(integration by parts)}\\
      &=
      -\int_{\mathbb T}\operatorname{tr}(u'v)\,d\mu
      \quad\text{(substitute $v = u' + u^2$)}\\
      &=
      -\int_{\mathbb T}\operatorname{tr}((u')^2)\,d\mu
      -\int_{\mathbb T}\operatorname{tr}(u'u^2)\,d\mu.
      \quad\text{ }
\end{align*}
The last integral vanishes because
\[
    \int_{\mathbb T}\operatorname{tr}(u'u^2)\,d\mu
      =
      \frac13\int_{\mathbb T}
      \bigl(\operatorname{tr}(u^3)\bigr)'\,d\mu=0.
\]
This proves the identity in
\eqref{eq:circle-resolvent-dissipation}.  Its inequality follows by
applying Theorem~\ref{thm:circle-logarithmic-wirtinger} to
\(k_{t,s}\), and \eqref{eq:circle-resolvent-decay} follows from
Gronwall's lemma.
\end{proof}

The resolvent-energy dissipation immediately implies
the decay of
Bogoliubov--Kubo--Mori Fisher information by integral. Indeed, integration by parts and
\eqref{eq:frechet-log-resolvent} yield
\begin{align}
    \mathcal I(g_t)
      &=
      \int_{\mathbb T}
      \operatorname{tr}\bigl(
        g_t'(D\log)_{g_t}(g_t')
      \bigr)\,d\mu
      \notag\\
      &=
      \int_0^\infty\int_{\mathbb T}
      \operatorname{tr}\bigl(
        (g_t+s1)^{-1}g_t'
        (g_t+s1)^{-1}g_t'
      \bigr)\,d\mu\,ds
      =\int_0^\infty\mathcal E_s(t)\,ds.
    \label{eq:circle-fisher-resolvent}
\end{align}
As the scalar integrands in
\eqref{eq:circle-fisher-resolvent} are nonnegative,  Tonelli's theorem
and \eqref{eq:circle-resolvent-decay} therefore give
\[
    \mathcal I(H_tg)
      \leq
      e^{-2t}\mathcal I(g),
\]
which yields the exponential entropy decay \eqref{eq:circle-main-entropy-decay} for smooth $g$. The extension to general positive Borel matrix valued function $f\in L_1(\mathbb{T},M_n^+)$ can be obtained by a standard approximation argument (see \cite[Appendix]{BrannanGaoJunge2022}).

\begin{remark}[Finite tracial coefficients]{\rm 
\label{rem:finite-tracial-extension} The sharp modified log-Sobolev inequality obtained above remain valid with
\(M_n\) replaced by an arbitrary finite tracial von Neumann algebra, which is needed for the transference to Quantum Tori in Corollary \ref{cor:sharp-tori}.
Indeed, in Lemma~\ref{lem:frechet-log-trace} the finite spectral sum can be the double-operator-integral formula
for the divided difference with the logarithm
(see e.g. \cite[Equation~(1.1)]{Peller2006MOI}).  The logarithmic center remains
available by Lawson's Karcher-mean theorem for unital
\(C^*\)-algebras \cite[Theorem~7.4]{Lawson2020Karcher}.  All remaining
steps use only traciality and the Hilbert-space Poincar\'e inequality.}
\end{remark}

\bibliographystyle{amsalpha}
\bibliography{section2_references}

\end{document}